\newtheorem{theorem}{\bf Theorem}[section]
\newtheorem{corollary}[theorem]{\bf Corollary}
\newtheorem{lemma}[theorem]{\bf Lemma}
\newtheorem{conjecture}[theorem]{\bf Conjecture}
\begin{document}

\begin{center}
\textbf{\Large A note on the Thue chromatic number \\ \smallskip of lexicographic products of graphs}
\end{center}

\bigskip

\begin{center}
{\Large Iztok Peterin}\footnote{%
Institute of Mathematics and Phisics, Faculty of Electrical Engineering and
Computer Science, University of Maribor, Maribor, Slovenia
[iztok.peterin@uni-mb.si]}, {\Large {Jens Schreyer}}\footnote{%
Institute of Mathematics, Faculty of Mathematics and Natural Sciences,
Technical University Ilmenau, Ilmenau, Germany [jens.schreyer@tu-ilmenau.de]}%
,

\smallskip

{\Large Erika \v Skrabul\kern-0.035cm\char39\kern-0.03cm \'akov\'a}\footnote{%
Institute of Control and Informatization of Production Processes, Faculty of
Mining, Ecology, Process Control and Geotechnology, Technical University of
Ko\v sice, Ko\v sice, Slovakia [erika.skrabulakova@tuke.sk]} and {\Large %
Andrej Taranenko}\footnote{%
Department of Mathematics and Computer science, Faculty of Natural Sciences
and Mathematics, University of Maribor, Maribor, Slovenia
[andrej.taranenko@uni-mb.si]}

\bigskip
\end{center}

{\abstract
A sequence is called {\em non-repetitive} if no of its subsequences forms a re\-pe\-ti\-ti\-on (a sequence $r_1,r_2,\dots,r_{2n}$ such that $r_i=r_{n+i}$ for all $1\leq i \leq n$). Let $G$ be a graph whose vertices are coloured. A colouring  $\varphi$ of the graph $G$ is {\em non-repetitive} if the sequence of colours on every path in $G$ is non-repetitive.  The {\em Thue chromatic number}, denoted by $\pi (G)$, is the minimum number of colours of a non-repetitive colouring of $G$. \\
In this short note we present a general upper bound for the Thue chromatic number for the lexicographic product
$G\circ H$ of graphs $G$ and $H$ with respect to some properties of the factors.
This upper bound is then used to derive the exact values for $\pi(G\circ H)$ when
$G$ is a complete multipartite graph and $H$  is an arbitrary graph.
}\medskip

\noindent \textbf{Keywords:} non-repetitive colouring, Thue chromatic
number, lexicographic product of graphs\medskip

\noindent \textbf{AMS subject classification (2010)}: 05C15, 05C76

\section{Introduction and preliminaries}


In 1906 the Norwegian mathematician and number theoretician Axel Thue started
the systematic study of word structure. Thue \cite{Th06} has shown that
there are arbitrarily long non-repetitive sequences over three symbols,
where a sequence $a_{1}a_{2}\ldots $ is called \emph{non-repetitive} if it
does not contain a subsequence of consecutive elements, the first half of which is
exactly the same as its second half. A sequence $r_{1}\ldots r_{2n}$ such that $%
r_{i}=r_{n+i}$ for all $1\leq i\leq n$ is called a \emph{repetition}.

With the development of computer-science the research on string-type chains
became more and more popular. Non-repetitive sequences found their
applications beside mathematics or informatics in a lot of very different
areas counting from information security management up to music.

By Alon, Grytczuk, Ha\l uszcak and Riordan \cite{AGHR02} non-repetitive
sequences were introduced also to graph theory: Let $G$ be a simple
graph and let $\varphi $ be a proper colouring of its vertices, $\varphi
:V(G)\rightarrow \{1,\ldots ,k\}$. We say that $\varphi $ is \emph{%
non-repetitive} if for any simple path on vertices $v_{1}\ldots v_{2n}$ in $%
G $ the associated sequence of colours $\varphi (v_{1})\ldots \varphi
(v_{2n})$ is not a repetition. The minimum number of colours in a
non-repetitive colouring of a graph G is the \emph{Thue chromatic number} $%
\pi (G)$. For the case of list-colourings let the \emph{Thue choice number} $%
\pi _{ch}(G)$ of a graph $G$ denotes the smallest integer $k$ such that for
every list assignment $L:V(G)\rightarrow 2\sp{\mathbb{N}}$ with minimum list
length at least $k$, there is a colouring of vertices of $G$ from the
assigned lists such that the sequence of vertex colours of no path in $G$
forms a repetition. If a graph $G$ is non-repetitively list colourable for
every list assignment $L$ with list size at least $k$, we call $G$ \emph{%
non-repetitively k-choosable}. Hence, $\pi _{ch}(G)$ is the smallest integer
$k$ such that $G$ is non-repetitively $k$-choosable.

In \cite{Gr07} various questions concerning non-repetitive colourings of
graphs have been formulated. We deal with the problem to find the minimum
number of colours that can be used to colour all vertices of an arbitrary graph
such that the obtained colouring is non-repetitive. The problem of
determining the Thue chromatic number of a graph was studied among others in \cite%
{AGHR02,BGKNP07,CzGr07,KPZ11}.

The \emph{lexicographic product} or graph composition $G\circ H$ \emph{of
graphs $G$ and $H$} is a graph such that the vertex set of $G\circ H$ is the
Cartesian product $V(G)\times V(H)$ and any two vertices $(u,v)$ and $(x,y)$
are adjacent in $G\circ H$ if either $u$ is adjacent with $x$ in
$G$ or $u=x$ and $v$ is adjacent with $y$ in $H$. For any vertex $v\in V(G)$
we call the set $\{(v,w):w\in V(H)\}$ an $H$-\textit{layer} (through $v$) and will
be denoted by $v[H]$. A subgraph induced by $v[H]$ is clearly isomorphic to $H$.
If $G\circ H$ is a coloured graph, then we say that an $H$-layer $v[H]$ is
\textit{rainbow coloured} whenever all vertices of $v[H]$ have pairwise different
colours.
Note that the lexicographic product is in general non-commutative: $%
G\circ H\neq H\circ G$. The independence number of a lexicographic product
may be easily calculated from that of its factors (see \cite{GeSt75}): $%
\alpha (G\circ H)=\alpha (G)\alpha (H)$ and the clique number of a
lexicographic product is multiplicative as well: $\omega (G\circ H)=\omega
(G)\omega (H)$.

The Thue chromatic number of $G\circ H$ when $G$ is a path and $H$ is either
an empty graph $E_k$ or
a complete graph $K_k$ (also called the blow-up of G by H) was studied in \cite%
{KPZ11}. Here we give some upper bounds for the Thue chromatic number of
the lexicographic product of arbitrary graphs and demonstrate the
tightness of the bounds by some examples. As a side result we show that there exist families
of graphs where the Thue chromatic number and the Thue choice number are the
same\footnote{%
Remark that in general the Thue chromatic number and the Thue choice number
of the same graph may have arbitrary large difference (see \cite{FOOMZ11}),
however the most interesting open problem from this area is whether the Thue
chromatic number of a path equals its Thue choice number (see \cite{GPZ10}).}.

An easy observation about non-repetitive sequences is the following: If a
non-repetitive sequence is interrupted by non-repetitive sequences using a
distinct set of symbols, then the resulting new sequence remains
non-repetitive. Formally we get the following lemma, proved in \cite{HJSS09}%
, where for a sequence of symbols $S=a_{1}\ldots a_{n}$ with $a_{i}\in
\mathbb{A}$, for all $1\leq k\leq \ell \leq n$, the block $%
a_{k}a_{k+1}\ldots a_{\ell }$ is denoted by $S_{k,\ell }$.\bigskip

\begin{lemma}
\textbf{(Havet at al.)}\label{1} Let $A=a_{1}\ldots a_{m}$ be a
non-repetitive sequence with $a_{i}\in \mathbb{A}$ for every $i\in
\{1,\ldots ,m\}$. Let $B^{i}=b_{1}^{i}\ldots b_{m_{i}}^{i}$; $0\leq i\leq r+1
$, be non-repetitive sequences with $b\sp{i}_{j}\in \mathbb{B}$ for every $%
i\in \{0,\ldots ,r+1\}$ and $j\in \{1,\ldots ,m_{i}\}$. If $\mathbb{A}\cap
\mathbb{B}=\emptyset $ then $S=B^{0}A_{1,n_{1}}B^{1}A_{n_{1}+1,n_{2}}\ldots
B^{r}A_{n_{r}+1,m}B^{r+1}$ is a non-repetitive sequence.
\end{lemma}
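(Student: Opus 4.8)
The plan is to prove this combinatorial statement about sequences directly, by assuming for contradiction that the interleaved sequence $S$ contains a repetition and then tracing where that repetition must sit relative to the inserted blocks $B^i$. First I would fix notation: a repetition in $S$ is a block $S_{p,q}$ of even length $2n$ such that its first half equals its second half symbol-by-symbol, i.e. the symbol in position $p+t$ equals the symbol in position $p+n+t$ for all $0\le t<n$. Since $\mathbb{A}\cap\mathbb{B}=\emptyset$, the two halves must agree on which positions carry $\mathbb{A}$-symbols and which carry $\mathbb{B}$-symbols; this ``type alignment'' is the structural backbone of the whole argument, because it forces the $A$-part of the first half to match the $A$-part of the second half, and likewise for the $\mathbb{B}$-symbols.

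The key steps, in order, would be: (1) record that $S$ is built by alternating the consecutive $A$-blocks $A_{n_i+1,n_{i+1}}$ with the full $B$-blocks $B^i$, so every maximal run of $\mathbb{A}$-symbols in $S$ is a contiguous block of $A$ and every maximal run of $\mathbb{B}$-symbols is exactly one of the $B^i$. (2) Using the type alignment from the shift by $n$, argue that the subsequence of $\mathbb{A}$-symbols read off the repetition $S_{p,q}$ is itself a repetition: the $\mathbb{A}$-positions in the first half map bijectively, preserving order, onto the $\mathbb{A}$-positions in the second half, and the symbols coincide. But the $\mathbb{A}$-symbols of $S$, read in order, form precisely $A$ (the blocks $A_{n_i+1,n_{i+1}}$ concatenate back to $A_{1,m}=A$), so we would obtain a repetition inside the non-repetitive sequence $A$ — unless the repetition involves no $\mathbb{A}$-symbols at all, or fewer than a full matched half. (3) Symmetrically, the $\mathbb{B}$-symbols of the repetition form a repetition, and here the subtlety is that the $\mathbb{B}$-symbols come from possibly several different $B^i$; I would use the alignment to show that the repetition, if it lies within $\mathbb{B}$-symbols, cannot straddle two distinct blocks $B^i$ and $B^j$ with an intervening nonempty $A$-block, because such an $A$-block would have to match against $\mathbb{B}$-symbols, contradicting disjointness. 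This confines any purely-$\mathbb{B}$ repetition to a single $B^i$, which is non-repetitive by hypothesis.

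The main obstacle I anticipate is the careful bookkeeping in step (2)–(3) when the repetition contains both $\mathbb{A}$- and $\mathbb{B}$-symbols: I must verify that the order-preserving correspondence between the two halves really does send $\mathbb{A}$-positions to $\mathbb{A}$-positions (not merely match the multisets), and that the induced maps on the $\mathbb{A}$-subsequence and the $\mathbb{B}$-subsequence are each again a half-to-half matching. The clean way to handle this is to observe that the shift-by-$n$ map $t\mapsto t$ sending position $p+t$ to $p+n+t$ preserves the $\{\mathbb{A},\mathbb{B}\}$-type pattern, so it restricts to an order-isomorphism on $\mathbb{A}$-positions and on $\mathbb{B}$-positions separately; each restriction exhibits the corresponding subsequence as a repetition. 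Once this is set up, the contradiction is immediate from the non-repetitiveness of $A$ and of each $B^i$. I would conclude that no repetition exists in $S$, which is exactly the claim. \qed
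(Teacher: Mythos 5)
Your proposal is correct, but note that the paper itself gives no proof of this lemma --- it is quoted from Havet et al.\ \cite{HJSS09} --- so there is nothing internal to compare against. Your argument is the standard (and, I believe, the intended) one: since $\mathbb{A}\cap\mathbb{B}=\emptyset$, the shift by $n$ inside a hypothetical repetition preserves the $\mathbb{A}$/$\mathbb{B}$-type of each position, so it restricts to an order-preserving half-to-half matching on the $\mathbb{A}$-positions; as the $\mathbb{A}$-symbols of $S$ read in order are exactly $A$, and the $\mathbb{A}$-subsequence of a contiguous block of $S$ is a contiguous block of $A$, any repetition containing an $\mathbb{A}$-symbol projects to a repetition in $A$, while a repetition containing no $\mathbb{A}$-symbol is a contiguous run of $\mathbb{B}$-symbols and hence (the separating $A$-blocks being nonempty) sits inside a single $B^i$. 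Two small points to tighten: your phrase ``or fewer than a full matched half'' is a red herring --- type alignment guarantees the two halves carry equally many $\mathbb{A}$-symbols in corresponding positions, so no partial-matching case arises; and in your step (3) the reason a purely-$\mathbb{B}$ repetition cannot straddle two blocks $B^i$, $B^j$ is simply that it would then have to contain the intervening nonempty $A$-block and so would not be purely $\mathbb{B}$, not that an $A$-block ``matches against'' $\mathbb{B}$-symbols. With those clarifications the proof is complete.
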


A \emph{rainbow} sequence, i.e. a sequence of pairwise different elements, is trivially non-repetitive. This implies the following corollary:

\begin{corollary}
\label{rainbow} Let $A=a_{1}\ldots a_{m}$ be a rainbow sequence with $%
a_{i}\in \mathbb{A}$ for all $i\in \{1,\ldots ,m\}$. For $i\in \{0,\ldots
,r+1\}$ let $b_{i}\notin \mathbb{A}$. Then $S=b_{0}A_{1,n_{1}}b_{1}A_{n_{1}+1,n_{2}}%
\ldots b_{r}A_{n_{r}+1,m}b_{r+1}$ is a non-repetitive sequence.
\end{corollary}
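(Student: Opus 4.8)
The plan is to obtain this as an immediate specialization of Lemma~\ref{1}, taking the interrupting blocks $B^i$ to be single symbols. The first thing I would record is the fact emphasized just before the statement: a rainbow sequence is non-repetitive. Indeed, a repetition $r_{1}\ldots r_{2n}$ with $n\geq 1$ forces $r_{1}=r_{n+1}$, so a sequence of pairwise distinct elements can never be a repetition; and since every block of consecutive elements of a rainbow sequence is again rainbow, no such block is a repetition either. Hence $A=a_{1}\ldots a_{m}$ satisfies exactly the hypothesis that Lemma~\ref{1} imposes on its sequence ``$A$'', namely being non-repetitive over the alphabet $\mathbb{A}$.

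Next I would set up the second family of sequences. For each $i\in\{0,\ldots,r+1\}$ I take $B^{i}$ to be the one-element sequence $b_{i}$, which is vacuously non-repetitive. Putting $\mathbb{B}=\{b_{0},b_{1},\ldots,b_{r+1}\}$ for the finite set of interrupting symbols, the assumption $b_{i}\notin\mathbb{A}$ for every $i$ gives precisely $\mathbb{A}\cap\mathbb{B}=\emptyset$. Here the only point worth flagging is that the $b_{i}$ need not be distinct from one another: Lemma~\ref{1} requires the blocks $B^{i}$ only to be individually non-repetitive and to lie in a common alphabet disjoint from $\mathbb{A}$, and imposes no condition that the blocks themselves differ, so coincidences among the $b_{i}$ are harmless.

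With these choices the sequence produced by Lemma~\ref{1}, namely $B^{0}A_{1,n_{1}}B^{1}A_{n_{1}+1,n_{2}}\ldots B^{r}A_{n_{r}+1,m}B^{r+1}$, is literally $S=b_{0}A_{1,n_{1}}b_{1}A_{n_{1}+1,n_{2}}\ldots b_{r}A_{n_{r}+1,m}b_{r+1}$, and the lemma yields that $S$ is non-repetitive. I do not anticipate any genuine obstacle: all the real work is carried by Lemma~\ref{1}, and the argument reduces to two pieces of bookkeeping — checking that ``rainbow'' delivers the non-repetitiveness of $A$ needed to invoke the lemma, and choosing the common alphabet $\mathbb{B}$ so that the disjointness hypothesis holds even when some of the $b_{i}$ happen to coincide.
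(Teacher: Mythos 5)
Your proposal is correct and is exactly the argument the paper intends: the corollary is stated as an immediate consequence of the observation that a rainbow sequence is non-repetitive, combined with Lemma~\ref{1} applied to single-symbol blocks $B^i=b_i$. Your additional bookkeeping (blocks of a rainbow sequence are rainbow, the $b_i$ need not be distinct) is a sound and welcome elaboration of what the paper leaves implicit.
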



\section{Main results}
We start with a general upper bound for the Thue chromatic number of
lexicographic products. Recall that $\alpha (G)$ is the notation for the
independence number of a graph $G$.

\begin{theorem}
\label{up} For all simple graphs $G$ and $H$ we have that
\begin{equation*}
\pi (G\circ H)\leq \pi (H)+(|V(G)|-\alpha (G))|V(H)|.
\end{equation*}
\end{theorem}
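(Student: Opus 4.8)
The plan is to exhibit an explicit non-repetitive colouring of $G\circ H$ using exactly the claimed number of colours. Fix a maximum independent set $I\subseteq V(G)$, so $|I|=\alpha(G)$ and $|V(G)\setminus I|=|V(G)|-\alpha(G)$. Take a fixed non-repetitive colouring $\varphi_H$ of $H$ with colours from a \emph{base} palette $\mathbb{B}$, $|\mathbb{B}|=\pi(H)$. For every $v\in I$ I would colour the layer $v[H]$ by copying $\varphi_H$, reusing the \emph{same} set $\mathbb{B}$ on all of these independent layers. For each of the remaining vertices $w\in V(G)\setminus I$ I would colour the layer $w[H]$ rainbow, using a private palette $\mathbb{A}_w$ of $|V(H)|$ \emph{fresh} colours, with the $\mathbb{A}_w$ pairwise disjoint and disjoint from $\mathbb{B}$. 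The total colour count is then $\pi(H)+(|V(G)|-\alpha(G))\cdot|V(H)|$, matching the bound.

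First I would verify properness. Inside a layer over $I$ we inherit properness from $\varphi_H$; inside a rainbow layer all colours differ. For an edge between distinct layers $u[H]$ and $x[H]$ (which exists only when $u$ is adjacent to $x$ in $G$), the two endpoints cannot both lie over $I$ since $I$ is independent, so at least one endpoint carries a colour from some $\mathbb{A}_w$; as that palette is disjoint from everything else, the two colours differ. Hence the colouring is proper.

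The crux is non-repetitiveness, and here the key observation is that every rainbow colour is used on exactly one vertex of the whole product, hence it is \emph{globally unique}. Suppose for contradiction that some simple path $P=(u_1,v_1)\ldots(u_{2n},v_{2n})$ has a colour sequence forming a repetition $c_i=c_{n+i}$. Since the $2n$ vertices of $P$ are distinct, any equality $c_i=c_{n+i}$ equates the colours of two different vertices; a globally unique rainbow colour cannot do this, so no $c_i$ lies in any $\mathbb{A}_w$. Therefore every $c_i$ belongs to $\mathbb{B}$, which forces $u_i\in I$ for all $i$. Now consecutive vertices $(u_i,v_i),(u_{i+1},v_{i+1})$ are adjacent in $G\circ H$; because $u_i,u_{i+1}\in I$ are non-adjacent in $G$, the only possibility is $u_i=u_{i+1}$. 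Thus all $u_i$ coincide with a single $u\in I$, so $P$ lies entirely in the layer $u[H]$, and its colour sequence is $\varphi_H(v_1)\ldots\varphi_H(v_{2n})$ along a path $v_1\ldots v_{2n}$ of $H$. This contradicts the non-repetitiveness of $\varphi_H$, completing the argument.

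I expect the confinement step to be the main point to get right: the whole proof hinges on arguing that a putative repetition can use only the shared base colours and must therefore collapse into one independent layer. This reduction is what makes Lemma~\ref{1} and Corollary~\ref{rainbow} unnecessary for this particular bound, although they become the natural tools when refining the construction so that the non-$I$ layers need not be fully rainbow.
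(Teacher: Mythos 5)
Your construction and argument coincide with the paper's own proof: the same colouring (a shared palette of $\pi(H)$ colours copied across the layers over a maximum independent set, globally unique colours elsewhere) and the same contradiction, namely that globally unique colours cannot occur in a repetition, so a repetitive path would be confined to layers over the independent set and hence to a single non-repetitively coloured layer. The proposal is correct and essentially identical to the paper's proof, only phrasing the final confinement step in the contrapositive direction.
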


\begin{proof}
Let $M$ be an independent set of vertices in $G$ of cardinality $\alpha (G)$.
Colour all $H$-layers corresponding to the vertices from $M$ with the same
set of $\pi(H)$ colours, so that the copy of a graph $H$ in each $H$-layer is
coloured non-repetitively and for every two vertices $w',w\in M$ the colouring
of $w'[H]$ is the same as the colouring of $w[H]$. For all other vertices use different
colours. Obviously such a colouring uses $\pi(H)+(|V(G)|-\alpha (G))|V(H)|$
colours. We claim that the obtained colouring, say $\varphi$, is a non-repetitive
colouring of $G\circ H$.

Assume that there exists a repetitive path
$P=v_1\ldots v_rv_{r+1} \ldots v_{2r}$ in $G \circ H$, such that
$\varphi(v_1)=\varphi(v_{r+1}),\ldots ,\varphi(v_{r})=\varphi(v_{2r})$.
Note that a vertex from $(V(G)-M)\times V(H)$ is not on $P$, since it has a
unique colour. Thus for each $j\in\{1,\ldots, 2r\}$ is $v_j \in w[H]$ for some
$w\in M$. Since every
$H$ layer is coloured non-repetitively, not all vertices of $P$ can be in the same $H$ layer. Without loss of generality suppose that
$v_1 \in w_1[H]$  and $v_2 \in w_2[H]$. As $e=v_1v_2$ is an edge in $P$,
there exists an edge $e'=w_1w_2$ in $G$, a contradiction with $M$ being an
independent set of vertices. Hence, our assumption is wrong and  $\varphi$ is non-repetitive. \hfill $\Box $\bigskip
\end{proof}

Before showing the sharpness of the bound for $\pi(G \circ H)$ in
Theorem~\ref{up} we prove a result, that is a little stronger than needed, as it is dealing with vertex list colourings.

\begin{theorem}
\label{pi} If $G$ is a a graph on $n$ vertices, then the following
statements hold:

\begin{enumerate}
\item $\pi (G)\leq \pi _{ch}(G)\leq n-\alpha (G)+1$.

\item if $G$ is a complete multipartite graph, then $\pi (G)=\pi
_{ch}(G)=n-\alpha (G)+1$.
\end{enumerate}
\end{theorem}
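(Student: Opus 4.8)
The plan is to prove the chain $\pi(G)\leq \pi_{ch}(G)\leq n-\alpha(G)+1$ for statement (1), and then for statement (2) to match the upper bound coming from (1) with a lower bound $\pi(G)\geq n-\alpha(G)+1$, so that all three quantities are squeezed to the common value $n-\alpha(G)+1$. The inequality $\pi(G)\leq \pi_{ch}(G)$ is immediate: assigning to every vertex the identical list $\{1,\ldots,\pi_{ch}(G)\}$, any non-repetitive colouring chosen from these lists uses at most $\pi_{ch}(G)$ colours. For $\pi_{ch}(G)\leq n-\alpha(G)+1$, I fix a maximum independent set $M$, so that $|M|=\alpha(G)$ and $|V(G)\setminus M|=n-\alpha(G)$, and I assume every vertex $v$ has a list $L(v)$ with $|L(v)|\geq n-\alpha(G)+1$. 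First I colour the $n-\alpha(G)$ vertices of $V(G)\setminus M$ one at a time, each time choosing from its list a colour distinct from all previously used ones; at most $n-\alpha(G)-1$ colours are ever forbidden, which is fewer than the list size $n-\alpha(G)+1$, so a free colour always exists. This yields a rainbow colouring of $V(G)\setminus M$ with colour set $\mathbb{A}$ of size $n-\alpha(G)$. Then each $v\in M$ receives a colour of $L(v)$ lying outside $\mathbb{A}$, possible since $|L(v)|>|\mathbb{A}|$.

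To see that this colouring is non-repetitive, I take an arbitrary path $P$ and read off its colour sequence. Since $M$ is independent, no two vertices of $M$ are consecutive on $P$, so the $M$-vertices occur as isolated single interruptions of the subsequence formed by the $V(G)\setminus M$-vertices. That subsequence is rainbow, because all of $V(G)\setminus M$ is rainbow coloured, and its colours lie in $\mathbb{A}$, while the interrupting $M$-colours lie outside $\mathbb{A}$. This is exactly the pattern of Corollary~\ref{rainbow} (allowing empty blocks at the ends when $P$ begins or ends with a non-$M$ vertex), so the sequence is non-repetitive; note that the interrupting symbols need not be distinct, which is precisely why $M$ may reuse colours freely. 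Hence $\pi_{ch}(G)\leq n-\alpha(G)+1$, completing (1).

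For statement (2) the upper bound is already furnished by (1), so it remains to prove $\pi(G)\geq n-\alpha(G)+1$ when $G=K_{n_1,\ldots,n_k}$ is complete multipartite with parts $V_1,\ldots,V_k$ and $\alpha(G)=\max_i n_i$. Let $\varphi$ be any non-repetitive, hence proper, colouring. Because distinct parts are completely joined, every colour class lies inside a single part, so the used colours split into disjoint sets $C_1,\ldots,C_k$ with $C_a$ the colours on $V_a$. The key step is to show that at most one part fails to be rainbow: if two distinct parts $V_a,V_b$ each carried a repeated colour, say $\varphi(u)=\varphi(u')$ with $u,u'\in V_a$ and $\varphi(w)=\varphi(w')$ with $w,w'\in V_b$, then $u\,w\,u'\,w'$ would be a simple path whose colour sequence $\varphi(u)\,\varphi(w)\,\varphi(u)\,\varphi(w)$ is a repetition, contradicting non-repetitiveness. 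Thus at least $k-1$ parts are rainbow, and denoting by $V_{a_0}$ the single possibly non-rainbow part I obtain $\sum_a |C_a|\geq 1+\sum_{a\neq a_0} n_a = 1+(n-n_{a_0})\geq n-\alpha(G)+1$, using $n_{a_0}\leq\alpha(G)$. Together with (1) this gives $\pi(G)=\pi_{ch}(G)=n-\alpha(G)+1$.

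I expect the main obstacle to be the lower bound in (2), and within it the single observation that two non-rainbow parts already force a length-four repetition; once this is seen, the counting is routine. Everything in (1) is either immediate or a direct application of Corollary~\ref{rainbow}, the one point needing care being the verification that the greedy rainbow step never exhausts a list, which rests on the list length strictly exceeding $n-\alpha(G)-1$.
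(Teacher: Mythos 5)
Your proposal is correct and follows essentially the same route as the paper: part (1) greedily rainbow-colours $V(G)\setminus M$ from the lists, gives each vertex of the maximum independent set $M$ a leftover colour, and invokes Corollary~\ref{rainbow}, exactly as the paper does. For part (2) both arguments hinge on the same key observation that repeated colours in two different parts yield the repetitive path $u\,w\,u'\,w'$; you package this as ``at most one part is non-rainbow'' and count colours directly, while the paper counts vertices without a unique colour via pigeonhole and derives a contradiction, but these are the same argument in dual bookkeeping.
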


\begin{proof}~~
\begin{enumerate}
\item
Let $G$ be a graph on $n$ vertices. As every non-repetitive $k$-colouring of $G$ can
be considered as a non-repetitive list-colouring of $G$ from identical lists of size $k$,
the first inequality ($\pi(G)\leq\pi_{ch}(G)$) trivially holds.\\
To show the second inequality let $M$ be a maximum independent set of vertices from $V(G)$
with $|M|=\alpha(G)$, and let $L:V(G)\to 2\sp{\mathbb{N}}$ be any list assignment such that
each list length is at least $n-\alpha(G)+1$. Colour the vertices belonging to $V(G)- M$
with pairwise different colours from their lists, and remove all colours used by any of
these vertices from the lists of the vertices of $M$. As each list is of length at least
$n-\alpha(G)+1$, at least one colour from the list of each vertex $x\in M$  remains, and this
will be used to colour the vertex $x$. Now consider the sequence of vertex colours of any
path in $G$. The subsequence of colours belonging to the vertices of $V(G)- M$
constitute a rainbow sequence, which is interrupted by single colours belonging to the
vertices of $M$. Hence, by Corollary \ref{rainbow} such a colouring is non-repetitive,
which proves that $\pi(G)\leq\pi_{ch}(G)\leq n-\alpha(G)+1$.

\item
Let $G$ be a complete multipartite graph of order $n$ with partite sets $V_1,...,V_m$.
To prove the statement it is sufficient to show that $\pi(G)\ge n-\alpha(G)+1$ which we
will prove by contradiction. Assume there is a non-repetitive
($n-\alpha(G)$)-colouring $\varphi$ of $G$. Because there are $n$ vertices coloured by
$n-\alpha(G)$ different colours, by pigeon-hole principle the set
$M:=\{x\in V(G):\exists \ x'\in V(G)-\{x\}:\varphi(x)=\varphi(x')\}$ of vertices
without unique colour consists of at least $\alpha(G)+1$ vertices. Because all partite sets
consist of at most $\alpha(G)$ vertices there exist two vertices $x$ and $y$ of $M$
belonging to different partite sets. Without loss of generality we assume $x\in V_1$
and $y\in V_2$. Since adjacent vertices must receive different colours the colour
$\varphi(x)$ can only appear in $V_1$ and the colour $\varphi(y)$ can only appear in
$V_2$. Hence, there must be a vertex $x'\in V_1$ with $\varphi(x')=\varphi(x)$ and a vertex
$y'\in V_2$ with $\varphi(y')=\varphi(y)$. But then the colour sequence of the path
$P=(x,y,x',y')$ is  repetitive, a contradiction. \hfill $\Box $\bigskip
\end{enumerate}
\end{proof}

As an immediate corollary we obtain several infinite subclasses of complete
multipartite graphs, where the graph parameters $\pi$ and $\pi_{ch}$
coincide.

\begin{corollary}\label{example}The following statements hold:
\begin{itemize}
\item $\pi(K_n)=\pi_{ch}(K_n)=n$ for the complete graph $K_n$ on $n$ vertices.
\item $\pi(S_n)=\pi_{ch}(S_n)=2$ for a star $S_n$ on $n+1$ vertices.
\item $\pi(K_{m,n})=\pi_{ch}(K_{m,n})=\min\{m,n\}+1$ for a complete bipartite graph $K_{m,n}$.
\end{itemize}
\end{corollary}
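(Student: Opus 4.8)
The plan is to observe that each of the three graphs is a complete multipartite graph and then invoke part 2 of Theorem~\ref{pi}, which asserts that $\pi(G)=\pi_{ch}(G)=n-\alpha(G)+1$ for every complete multipartite graph $G$ of order $n$. Thus for each example it suffices to identify the order $n$ and to compute the independence number $\alpha$, after which the claimed values follow by direct substitution.

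First I would treat $K_n$: here every pair of vertices is adjacent, so $K_n$ is the complete multipartite graph with $n$ singleton partite sets, its order is $n$, and $\alpha(K_n)=1$. Theorem~\ref{pi}(2) then gives $\pi(K_n)=\pi_{ch}(K_n)=n-1+1=n$. Next, for the star $S_n$ I would note that it is precisely the complete bipartite graph $K_{1,n}$, with the central vertex forming one partite set and the $n$ leaves the other; its order is $n+1$ and $\alpha(S_n)=n$ (the set of leaves is a maximum independent set). Hence $\pi(S_n)=\pi_{ch}(S_n)=(n+1)-n+1=2$. Finally, $K_{m,n}$ is complete multipartite with two partite sets of sizes $m$ and $n$, so its order is $m+n$ and its independence number equals the size of the larger partite set, $\alpha(K_{m,n})=\max\{m,n\}$. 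Substituting into Theorem~\ref{pi}(2) yields $\pi(K_{m,n})=\pi_{ch}(K_{m,n})=(m+n)-\max\{m,n\}+1=\min\{m,n\}+1$.

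There is essentially no hard step here, since the corollary is a direct specialization of Theorem~\ref{pi}(2). The only points requiring a moment's care are the correct evaluations $\alpha(K_n)=1$ and $\alpha(K_{m,n})=\max\{m,n\}$, together with the observation that a star is just $K_{1,n}$, so that all three families genuinely satisfy the complete multipartite hypothesis of the theorem.
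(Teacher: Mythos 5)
Your proposal is correct and matches the paper's (implicit) argument exactly: the corollary is presented as an immediate consequence of Theorem~\ref{pi}(2), obtained by viewing each family as complete multipartite and substituting the order and independence number. Your evaluations $\alpha(K_n)=1$, $\alpha(S_n)=n$, and $\alpha(K_{m,n})=\max\{m,n\}$ are all right, so nothing is missing.
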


For the lower bound we strongly suspect that the following is true.

\begin{conjecture}
\label{down} For all simple graphs $G$ and $H$ we have that
\begin{equation*}
\pi (H)+(\pi (G)-1)|V(H)|\leq \pi (G\circ H).
\end{equation*}
\end{conjecture}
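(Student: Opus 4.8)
The plan is to start from an optimal non-repetitive colouring $\varphi$ of $G\circ H$ using $k=\pi(G\circ H)$ colours, and to record, for every vertex $v\in V(G)$, the set $C_v=\{\varphi(v,w):w\in V(H)\}$ of colours appearing on the $H$-layer $v[H]$. Three structural facts about these sets would form the backbone of the argument. First, since $\varphi$ is in particular a proper colouring and since $u\sim v$ in $G$ makes every vertex of $u[H]$ adjacent to every vertex of $v[H]$, adjacent $u,v$ must satisfy $C_u\cap C_v=\emptyset$ (a common colour would produce a monochromatic edge, i.e.\ a repetition of length $2$). Second, because each layer induces a copy of $H$ on which $\varphi$ restricts to a non-repetitive colouring, $|C_v|\ge \pi(H)$ for every $v$.

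The next step is to show that the assignment $v\mapsto C_v$ is itself a non-repetitive colouring of $G$, with the colour-sets playing the role of colours. Suppose not: then there is a path $v_1\ldots v_{2r}$ in $G$ with $C_{v_i}=C_{v_{r+i}}$ for all $i$. For each $i\le r$ choose a colour $c_i\in C_{v_i}=C_{v_{r+i}}$ and vertices $w_i\in v_i[H]$, $w_{r+i}\in v_{r+i}[H]$ carrying that colour; since the $v_j$ are distinct, the lifted vertices $(v_j,w_j)$ are distinct and consecutively adjacent, so they span a path of $G\circ H$ whose colour word is $c_1\ldots c_r c_1\ldots c_r$, a repetition, contradicting the choice of $\varphi$ (Lemma~\ref{1} and Corollary~\ref{rainbow} would let one prescribe the $c_i$ conveniently). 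Combined with the edge-disjointness above, this gives a \emph{proper} non-repetitive colouring of $G$ by colour-sets, so at least $\pi(G)$ pairwise distinct colour-sets occur. A third fact is that the layers which are \emph{not} rainbow coloured form an independent set of $G$: if $u\sim v$ and $a$ repeats in $u[H]$ while $b$ repeats in $v[H]$, then alternating between these four vertices yields a path with colour word $abab$, again a repetition. Hence at least $|V(G)|-\alpha(G)\ge\pi(G)-1$ layers are rainbow, and a rainbow layer satisfies $|C_v|=|V(H)|$.

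To finish, I would try to exhibit one ``base'' layer contributing its $\pi(H)$ colours together with $\pi(G)-1$ further rainbow layers whose colour-sets are mutually disjoint and disjoint from the base, which would account for exactly $\pi(H)+(\pi(G)-1)|V(H)|$ colours. The natural way to force the required pairwise disjointness is to locate such rainbow layers over a clique of size $\pi(G)$ in $G$, or more generally over a walk along which the non-repetitive colour-set colouring keeps introducing fresh colours.

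The hard part, and the reason the statement is posed as a conjecture rather than a theorem, is precisely this last accounting step. The disjointness $C_u\cap C_v=\emptyset$ is available only along \emph{edges}, whereas colour-sets of non-adjacent layers may overlap arbitrarily; moreover one has in general $\pi(G)\ge\chi(G)\ge |V(G)|/\alpha(G)$, so the crude double count $\sum_v|C_v|\le k\,\alpha(G)$ (each colour lives on an independent set of layers) only returns $k\ge |V(G)|\,|V(H)|/\alpha(G)-(|V(H)|-\pi(H))$, which is \emph{weaker} than the target bound. Thus forcing $(\pi(G)-1)|V(H)|$ genuinely new colours appears to require more than edge-local disjointness. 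I expect the clean cases to be exactly those where $G$ carries enough adjacency, such as $H=K_1$ (where the bound reduces to $\pi(G\circ K_1)=\pi(G)$) and the complete multipartite graphs treated via Theorem~\ref{up} and Theorem~\ref{pi}, while the general inequality needs a new idea for controlling the overlaps among non-adjacent rainbow layers.
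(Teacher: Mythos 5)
The statement you are asked about is Conjecture~\ref{down}: the paper itself offers no proof of it, only the special case where $G$ is complete multipartite (Theorem~\ref{downcomplete}). Your writeup is accordingly not a proof, and you say so yourself; what you do establish are several correct structural facts, but the argument stops exactly where the open difficulty lies.

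Concretely, your intermediate claims are all sound: adjacent layers have disjoint colour sets $C_u\cap C_v=\emptyset$; each layer needs $|C_v|\ge\pi(H)$; the map $v\mapsto C_v$ is a non-repetitive colouring of $G$ by colour-sets (the lifted path $(v_1,w_1)\ldots(v_{2r},w_{2r})$ is a genuine path because the $v_j$ are distinct, and its colour word $c_1\ldots c_rc_1\ldots c_r$ is a repetition); and the non-rainbow layers form an independent set. None of this, however, yields the required count $\pi(H)+(\pi(G)-1)|V(H)|$, because, as you correctly diagnose, disjointness of colour sets is only guaranteed across edges of $G$, and having $\pi(G)$ distinct colour-sets says nothing about how much they overlap. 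Your double-counting fallback is indeed weaker than the target. So there is a genuine gap, and it coincides with the reason the paper leaves the statement as a conjecture. It is worth noting how Theorem~\ref{downcomplete} closes this gap in the multipartite case, since it supplies precisely the ingredient your sketch lacks: once a single layer $g[H]$ is not rainbow, say with $\varphi(g,h)=\varphi(g,h')$, then for any two layers $g_1[H]$, $g_2[H]$ over vertices adjacent to $g$ (which in a complete multipartite graph means everything outside $g$'s partite set), a shared colour between $g_1[H]$ and $g_2[H]$ produces the repetition $(g,h)(g_1,h_3)(g,h')(g_2,h_4)$. This four-vertex path through the non-rainbow "hub" forces \emph{global} pairwise disjointness among all those layers, not merely edge-local disjointness, and the counting then closes. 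If you want to salvage something provable from your approach, that hub trick is the missing step for the dense case; for general $G$ the overlap control you identify as the obstacle remains open.
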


This conjecture is true for $\pi(P_n\circ E_k)$ and for $\pi(P_n\circ K_k)$ by Theorem 1.2 and
Theorem 1.4, respectively, of \cite{KPZ11}. Moreover it is sharp for $\pi(P_n\circ E_k)$ for
$n\geq 4$ and $k>2$, but not for $\pi(P_n\circ K_k)$ for $n\geq 28$  by the same theorems. We show that
Conjecture \ref{down} holds also for the lexicographic product of the complete multipartite
graph $K_{n_1,\ldots,n_k}$ with any graph $H$. This represents another good reason to believe that
Conjecture \ref{down} is true: namely paths are very sparse graphs while complete multipartite
graphs represent dense graphs with respect to the number of edges.

\begin{theorem}
\label{downcomplete} For a complete multiparite graph $G$ and an arbitrary graph $H$ we have that
\begin{equation*}
\pi (H)+(\pi (G)-1)|V(H)|\leq \pi (G\circ H).
\end{equation*}\end{theorem}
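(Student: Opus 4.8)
The plan is to prove the equivalent inequality $\pi(G\circ H)\ge \pi(H)+(n-\alpha(G))|V(H)|$, where $n=|V(G)|$, after using Theorem~\ref{pi}(2) to rewrite $\pi(G)-1=n-\alpha(G)$ for the complete multipartite graph $G=K_{n_1,\dots,n_k}$ with partite sets $V_1,\dots,V_k$ (recall $\alpha(G)=\max_i n_i$). Write $h=|V(H)|$ and $a=\alpha(G)$. I fix an arbitrary non-repetitive colouring $\varphi$ of $G\circ H$ and, for each $v\in V(G)$, let $C_v$ denote the set of colours appearing on the layer $v[H]$. Two immediate facts drive the argument: (i) $\varphi$ is proper, since a single edge is a length-one repetition; and (ii) if $v,w$ lie in different partite sets then every vertex of $v[H]$ is adjacent to every vertex of $w[H]$, whence $C_v\cap C_w=\emptyset$. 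Consequently the sets $D_i:=\bigcup_{v\in V_i}C_v$ are pairwise disjoint, so the total number of colours equals $\sum_{i=1}^k |D_i|$, and the theorem reduces to lower-bounding this sum.

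Call a layer \emph{rainbow} if its $h$ vertices carry $h$ distinct colours, and say two distinct layers of the same partite set \emph{overlap} if their colour sets meet. The heart of the proof is to confine all irregular behaviour to a single partite set, using in each case a four-vertex path whose colour word has the form $cdcd$. First, two \emph{adjacent} non-rainbow layers, say with $\varphi(v,x_1)=\varphi(v,x_2)$ and $\varphi(w,y_1)=\varphi(w,y_2)$, yield the repetitive path $(v,x_1)(w,y_1)(v,x_2)(w,y_2)$; hence the non-rainbow layers form an independent set, which in a complete multipartite graph forces them into one partite set. The same template shows that two overlaps occurring in different partite sets, and also a non-rainbow layer in one partite set together with an overlap in another, each produce a $cdcd$ repetition. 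Combining these three exclusion claims, there is a single partite set $V_q$ outside of which every layer is rainbow and no two layers overlap.

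With this structure in hand the count is immediate. For every $i\neq q$ the $n_i$ layers of $V_i$ are rainbow and pairwise colour-disjoint, so $|D_i|=n_i h$, while $|D_q|\ge\pi(H)$ because $D_q$ contains the colour set of any single layer, which already realises a non-repetitive colouring of $H$. Therefore $\sum_i|D_i|\ge\pi(H)+(n-n_q)h\ge\pi(H)+(n-a)h$, the final step using $n_q\le a=\max_i n_i$. The degenerate case $k=1$, in which $G$ is edgeless, is covered directly, since then $n=a$ and $\pi(G\circ H)\ge\pi(H)$ as $G\circ H$ contains $H$ as an induced subgraph.

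I expect the main obstacle to be the second paragraph: proving that non-rainbow layers and cross-layer colour repetitions cannot be spread across two or more partite sets. The individual $cdcd$ paths are easy to write down, but in each case one must verify that the four chosen vertices are genuinely distinct and that consecutive pairs are edges of $G\circ H$, and then assemble the three statements into the single conclusion that one partite set $V_q$ absorbs every irregularity. Once that is established, fact (ii) is precisely what converts a bound on layers into a bound on colours, and the closing inequality follows from the pigeonhole observation $n_q\le\alpha(G)$. Matching this lower bound against Theorem~\ref{up} incidentally yields the exact value $\pi(G\circ H)=\pi(H)+(\pi(G)-1)|V(H)|$.
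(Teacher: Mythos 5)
Your proof is correct, and it rests on the same key device as the paper's proof --- four-vertex paths $(v,x_1)(w,y_1)(v,x_2)(w,y_2)$ whose colour word is $cdcd$, used to force every layer outside one distinguished partite set to be rainbow and pairwise colour-disjoint --- but it is organised differently and is, in one respect, more thorough. The paper argues by contradiction: from the assumption that fewer than $\pi(G)|V(H)|$ colours are used it asserts that some layer $g[H]$ is not rainbow, and then routes all of its $cdcd$ paths through the two equally coloured vertices of that single pivot layer before counting what is left for $g[H]$. You instead prove a direct lower bound $\sum_i|D_i|\ge \pi(H)+(n-\alpha(G))|V(H)|$ on the pairwise disjoint colour sets $D_i$ of the partite sets, which requires the additional exclusion claim that overlaps cannot occur in two different partite sets. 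That extra claim is not redundant: the paper's opening inference (``fewer than $\pi(G)|V(H)|$ colours, hence some layer is non-rainbow'') does not follow from counting alone, since all layers could in principle be rainbow while colours are reused inside partite sets, giving only $k|V(H)|\le\pi(G)|V(H)|$ colours in total; it is precisely your overlap analysis that shows such configurations are repetitive. So your argument proves the theorem by the same mechanism while closing a small gap in the published proof; the remaining ingredients (disjointness of colour sets across partite sets by properness, $|C_v|\ge\pi(H)$ for any single layer, the pigeonhole step $n_q\le\alpha(G)$, and the identity $\pi(G)-1=n-\alpha(G)$ from Theorem~\ref{pi}) coincide with the paper's.
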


\begin{proof}
Let $G$ be a complete multipartite graph $K_{n_1,\ldots,n_k}$ where $V_1,\ldots,V_k$ form a
partition of $V(G)$ with $|V_1|=n_1,\ldots,|V_k|=n_k$ and $n_1+\cdots +n_k=n$.
Towards a contradiction suppose that there exists a non-repetitive colouring
$\varphi$ of $G\circ H$ using less than $\pi(H)+(\pi(G)-1)|V(H)|$ colours. Since
$\pi(H)\leq |V(H)|$, we have less than $\pi(G)|V(H)|$ colours for $\varphi$ in general.
Hence there exists $g \in V(G)$ such that in the layer $g[H]$ we have less than $|V(H)|$ different
colours. We may assume that $g\in V_1$ and that $\varphi(g,h)=1=\varphi(g,h')$. If there
exists  $g'\in V(G)-V_1$ with two different vertices in $g'[H]$, say $(g',h_1)$ and $(g',h_2)$,
of the same colour, then we have a repetition on $(g,h)(g',h_1)(g,h')(g',h_2)$, which is
a contradiction. Therefore for every $g'\in V(G)-V_1$ the layer $g'[H]$ must be rainbow coloured.
Let now $g_1,g_2\in V(G)-V_1$, $g_1\neq g_2$. If there are vertices of the same colour in
$g_1[H]$ and in $g_2[H]$, say $(g_1,h_3)$ and $(g_2,h_4)$, then we have again a repetition
$(g,h)(g_1,h_3)(g,h')(g_2,h_4)$, which is not possible. Thus for all pairs of different vertices $g_1,g_2\in V(G)-V_1$ all colours in  $g_1[H]$ and $g_2[H]$ must be pairwise different. This means
we have used $|(V(G)-V_1)\times V(H)|$ colours on $(G-V_1)\circ H$. By Theorem \ref{pi}, $\pi (G)=n-{\rm max}\{n_1,\ldots,n_k\}+1$.
Considering the number of colours in $\varphi$ and the number of colours we have used for $(G-V_1)\circ H$, the number of available colours for the layer $g[H]$ is the less than
\begin{eqnarray*}
	\pi(H) + (\pi(G)-1)\cdot|V(H)| - |(V(G)-V_1)|\cdot|V(H)| & = \\
	\pi(H) + (n-\max\{n_1, \ldots, n_k\}+1-1)\cdot|V(H)| - (n-n_1)\cdot|V(H)| & = \\
	\pi(H) + (n-\max\{n_1, \ldots, n_k\}-n+n_1)\cdot|V(H)| & =\\
	\pi(H) + (n_1-\max\{n_1, \ldots, n_k\})\cdot|V(H)|&\leq &\pi(H).
\end{eqnarray*}
This yields a final contradiction since there are less than $\pi(H)$ colours left for $g[H]$, which results in a repetition in the
$H$-layer $g[H]$. \hfill $\Box $\bigskip
\end{proof}

The family of complete multipartite graphs is one that establishes the tightness of the bounds for $G\circ H$ given by Theorem~\ref%
{up} and Theorem~\ref{downcomplete}.

\begin{theorem}
\label{multi} Let $G$ be a complete multipartite graph. Then $%
\pi(H)+(\pi(G)-1)|V(H)|= \pi(G \circ H) = \pi(H)+(|V(G)|-\alpha (G))|V(H)|$
\end{theorem}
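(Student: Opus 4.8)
The plan is to observe that for a complete multipartite graph $G$ the two displayed quantities are in fact equal, so that the already-established upper and lower bounds squeeze $\pi(G\circ H)$ between two identical values. In other words, I would treat this statement as a synthesis of three earlier results rather than as an independent argument.

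First I would invoke Theorem~\ref{pi}(2), which states that a complete multipartite graph $G$ on $|V(G)|$ vertices satisfies $\pi(G)=|V(G)|-\alpha(G)+1$, equivalently
\begin{equation*}
\pi(G)-1=|V(G)|-\alpha(G).
\end{equation*}
This single identity is the only ingredient specific to complete multipartite graphs: it makes the coefficient of $|V(H)|$ in the lower-bound expression $\pi(H)+(\pi(G)-1)|V(H)|$ literally equal to the coefficient of $|V(H)|$ in the upper-bound expression $\pi(H)+(|V(G)|-\alpha(G))|V(H)|$. Hence the left-hand and right-hand members of the claimed chain of equalities are one and the same number.

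Next I would apply the two bounds already proved. Theorem~\ref{up} gives, for all simple graphs, $\pi(G\circ H)\leq\pi(H)+(|V(G)|-\alpha(G))|V(H)|$, while Theorem~\ref{downcomplete}, valid precisely when $G$ is complete multipartite, gives $\pi(H)+(\pi(G)-1)|V(H)|\leq\pi(G\circ H)$. Combining these two inequalities with the identity above, the lower and upper bounds coincide, so $\pi(G\circ H)$ is forced to equal both of them, which is exactly the asserted double equality.

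The main obstacle here is essentially absent, since all of the genuine work has been front-loaded into the preceding results: the explicit colouring construction of Theorem~\ref{up}, the repetition-avoidance argument of Theorem~\ref{downcomplete}, and the exact value $\pi(G)=|V(G)|-\alpha(G)+1$ from Theorem~\ref{pi}. The only thing left to verify is the trivial algebraic identification of the two bounds via $\pi(G)-1=|V(G)|-\alpha(G)$; once that is noted, the theorem follows immediately by a sandwich argument with no further estimates required.
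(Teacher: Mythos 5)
Your proposal is correct and follows exactly the paper's own argument: use Theorem~\ref{pi}(2) to identify $\pi(G)-1=|V(G)|-\alpha(G)$ for complete multipartite $G$, then sandwich $\pi(G\circ H)$ between the now-coinciding bounds of Theorem~\ref{up} and Theorem~\ref{downcomplete}. No differences worth noting.
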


\begin{proof}
Theorem~\ref{pi} shows that $\pi(G)=n-\alpha(G)+1$, where $\alpha (G)$ is the independence
number of $G$ being a complete multipartite graph on $n$ vertices. The statement of
Theorem~\ref{multi} then directly follows from Theorem~\ref{up} and Theorem~\ref{downcomplete}.  \hfill $\Box $\bigskip
\end{proof}

Theorem~\ref{multi} and Corollary~\ref{example} then give the following Corollary.

\begin{corollary} For any graph $H$ we have that
\begin{itemize}
\item $\pi(K_n \circ H)=\pi(H)+(n-1)|V(H)|$, where $K_n$ is complete graph on $n$ vertices.

\item $\pi(S_n \circ H)=\pi(H)+|V(H)|$, where $S_n$ is a star on $n+1$
vertices.

\item $\pi(K_{m,n} \circ H)=\pi(H)+\min\{m,n\}\cdot |V(H)|$, where $K_{m,n}$ is a
complete bipartite graph on $m+n$ vertices.
\end{itemize}
\end{corollary}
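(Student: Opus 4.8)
The plan is to recognise that each of the three graphs $K_n$, $S_n$, and $K_{m,n}$ appearing in the three items is itself a complete multipartite graph, so that Theorem~\ref{multi} applies verbatim with $G$ taken to be that graph. Since Theorem~\ref{multi} asserts $\pi(G\circ H)=\pi(H)+(\pi(G)-1)|V(H)|$ for every complete multipartite $G$ and every $H$, the whole argument reduces to inserting the known values of $\pi(G)$ supplied by Corollary~\ref{example}.

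First I would treat the complete graph $K_n$, which is the complete $n$-partite graph all of whose parts are singletons. By Corollary~\ref{example} we have $\pi(K_n)=n$, so Theorem~\ref{multi} yields $\pi(K_n\circ H)=\pi(H)+(n-1)|V(H)|$, which is exactly the first item. Next, the star $S_n$ on $n+1$ vertices is the complete bipartite graph $K_{1,n}$, hence complete multipartite, and Corollary~\ref{example} gives $\pi(S_n)=2$; substituting into Theorem~\ref{multi} gives $\pi(S_n\circ H)=\pi(H)+(2-1)|V(H)|=\pi(H)+|V(H)|$, the second item. Finally $K_{m,n}$ is complete bipartite, so complete multipartite, and $\pi(K_{m,n})=\min\{m,n\}+1$ by Corollary~\ref{example}; then $\pi(K_{m,n}\circ H)=\pi(H)+(\min\{m,n\}+1-1)|V(H)|=\pi(H)+\min\{m,n\}\cdot|V(H)|$, the third item.

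The argument involves no real obstacle: the only thing to be careful about is the identification that lets Theorem~\ref{multi} apply, namely that $K_n$, $S_n=K_{1,n}$, and $K_{m,n}$ are all instances of complete multipartite graphs, together with the correct values of $\pi(G)$ from Corollary~\ref{example} (in particular that $\pi(S_n)=2$ independently of $n$, so that the coefficient $\pi(S_n)-1$ collapses to $1$). Once these are in place the three equalities follow by direct substitution into the identity of Theorem~\ref{multi}.
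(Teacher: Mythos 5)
Your proposal is correct and follows exactly the paper's route: the paper derives this corollary by combining Theorem~\ref{multi} with the values of $\pi(K_n)$, $\pi(S_n)$, and $\pi(K_{m,n})$ from Corollary~\ref{example}, just as you do. The only substance is the observation that all three graphs are complete multipartite, which you state explicitly.
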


\section{Acknowledgments}


This work was supported by the Slovak Research and Development Agency under the contract No. APVV-0482-11, by the grants VEGA 1/0130/12 and KEGA 040TUKE-4/2014. The Slovenian co-authors were supported by the research grant P1-0297 and are also with IMFM, Jadranska 19, 1000 Ljubljana, Slovenia.


\end{document}